\newcommand{\bC}{\mathbb{C}}
\newcommand{\bM}{\mathbb{M}}
\newcommand{\bQ}{\mathbb{Q}}
\newcommand{\bR}{\mathbb{R}}
\newcommand{\bZ}{\mathbb{Z}}
\newcommand{\fH}{\mathfrak{H}}
\newcommand{\Tr}{\operatorname{Tr}}
\newtheorem{thm}{Theorem}[section]
\newtheorem*{conj}{Conjecture}
\newtheorem{prop}[thm]{Proposition}
\newtheorem{cor}[thm]{Corollary}
\newtheorem*{thmm}{Theorem}
\theoremstyle{definition}
\newtheorem{defn}[thm]{Definition}
\newtheorem{rem}[thm]{Remark}
\begin{document}
\allowdisplaybreaks

\newcommand{\arXivNumber}{1712.10160}

\renewcommand{\thefootnote}{}

\renewcommand{\PaperNumber}{114}

\FirstPageHeading

\ShortArticleName{Characterizing Moonshine Functions by Vertex-Operator-Algebraic Conditions}

\ArticleName{Characterizing Moonshine Functions\\ by Vertex-Operator-Algebraic Conditions\footnote{This paper is a~contribution to the Special Issue on Modular Forms and String Theory in honor of Noriko Yui. The full collection is available at \href{http://www.emis.de/journals/SIGMA/modular-forms.html}{http://www.emis.de/journals/SIGMA/modular-forms.html}}}

\Author{Scott CARNAHAN, Takahiro KOMURO and Satoru URANO}
\AuthorNameForHeading{S.~Carnahan, T.~Komuro and S.~Urano}
\Address{Division of Mathematics, University of Tsukuba,\\
 1-1-1 Tennodai, Tsukuba, Ibaraki 305-8571 Japan}
\Email{\href{mailto:carnahan@math.tsukuba.ac.jp}{carnahan@math.tsukuba.ac.jp}, \href{mailto:komenian271828@gmail.com}{komenian271828@gmail.com}, \href{mailto:urank@math.tsukuba.ac.jp}{urank@math.tsukuba.ac.jp}}
\URLaddress{\url{http://www.math.tsukuba.ac.jp/~carnahan/}}

\ArticleDates{Received May 07, 2018, in final form October 15, 2018; Published online October 25, 2018}

\Abstract{Given a holomorphic $C_2$-cofinite vertex operator algebra $V$ with graded dimension $j-744$, Borcherds's proof of the monstrous moonshine conjecture implies any finite order automorphism of $V$ has graded trace given by a ``completely replicable function'', and by work of Cummins and Gannon, these functions are principal moduli of genus zero modular groups. The action of the monster simple group on the monster vertex operator algebra produces 171 such functions, known as the monstrous moonshine functions. We show that 154 of the 157 non-monstrous completely replicable functions cannot possibly occur as trace functions on $V$.}

\Keywords{moonshine; vertex operator algebra; modular function; orbifold}

\Classification{11F22; 17B69}

\renewcommand{\thefootnote}{\arabic{footnote}}
\setcounter{footnote}{0}

\section{Introduction}

In this paper, we show that the monstrous moonshine functions are nearly completely charac\-terized by vertex-operator-algebra-theoretic properties of trace functions. In particular, we strongly constrain the possible exotic automorphisms of any counterexample to the conjectured uniqueness of the monster vertex operator algebra $V^\natural$ (also known as the moonshine module) constructed in~\cite{FLM88}. Our constraints are derived from the following two recent vertex-operator-algebraic results:
\begin{enumerate}\itemsep=0pt
\item In \cite{vEMS}, it is shown that the graded dimension of an irreducible $g$-twisted module (of a~suitable vertex operator algebra) is given by applying the $\tau \mapsto -1/\tau$ transformation to the McKay--Thompson series for $g$. This is a refinement of an older result of~\cite{DLM97}.
\item In \cite{C17}, we find that cyclic orbifold duality gives a correspondence between non-Fricke elements of the monster and fixed-point free automorphisms of the Leech lattice satisfying a ``no massless states'' condition. This was conjectured by Tuite in~\cite{T93}.
\end{enumerate}

Monstrous moonshine originated in the 1970s with McKay's observation that the low-degree Fourier coefficients of the modular $j$-invariant could be decomposed into small combinations of dimensions of irreducible representations of the monster simple group~$\bM$. This initial observation formed the basis of what is known as the McKay--Thompson conjecture, asserting the existence of a natural graded representation $V = \bigoplus\limits_{n=0}^\infty V_n$ of the monster, whose graded dimension is given by $\sum\limits_{n =0}^\infty \dim V_n q^{n-1} = J(\tau) = q^{-1} + 196884q + \cdots$. Conway and Norton formulated the monstrous moonshine conjecture as a refinement, asserting that the graded trace of any element $g \in \bM$ should be the $q$-expansion of a genus zero modular function $T_g$, and computed a list of candidate functions.

The McKay--Thompson conjecture was affirmatively resolved by Frenkel, Lepowsky, and Meurman with their construction of $V^\natural$, and Borcherds showed that the monstrous moonshine conjecture was true for~$V^\natural$~\cite{B92}. The main body of Borcherds's proof was devoted to showing that for each $g \in \bM$, the graded trace function
\begin{gather*} T_g(\tau) = \sum_{n=0}^\infty \Tr(g|V_n) q^{n-1} \end{gather*}
is completely replicable. The completely replicable condition implies strong recursion relations on the coefficients of $T_g$, and indeed, these relations reduced the monstrous moonshine conjecture to checking that the first 7 coefficients of each $T_g$ matched the candidate list given by Conway and Norton.

Borcherds's proof of the completely replicable property did not use very much structure in~$V^\natural$: he only required that it be a vertex operator algebra of central charge 24, with a self-dual invariant form, and graded dimension equal to~$J$. It is natural to ask whether it is possible for other completely replicable functions to arise from other vertex operator algebras $V$ satisfying these properties. At our current state of knowledge, we cannot answer this question, because we do not have much control over this class of vertex operator algebras. However, we can make some progress by imposing additional natural conditions on $V$ that are also satisfied by $V^\natural$.

The additional condition we impose in this paper is that $V$ be ``holomorphic and $C_2$-cofinite'', which is equivalent to the condition that all (weak) representations of $V$ are direct sums of $V$ itself. A well-known uniqueness conjecture proposed in the introduction of~\cite{FLM88} implies any such~$V$ is isomorphic to~$V^\natural$. In particular, assuming this uniqueness conjecture, it is clear that the graded traces from $V^\natural$ make up all possible completely replicable functions arising from graded traces on such~$V$. However, the uniqueness conjecture is still open, so it is still an interesting question whether we can narrow down the possible functions using facts we already know about vertex operator algebras. In doing so, we also narrow down the possible symmetries of any counterexample to the uniqueness conjecture, assuming such a counterexample exists.

The ``holomorphic and $C_2$-cofinite'' condition allows us to connect $V$ with the phenomena of rational conformal field theory, and in particular, it gives us control over the twisted representations of~$V$. The pioneering work of \cite{DLM97} showed that if $g$ is a finite order automorphism of such~$V$, then there exists an irreducible $g$-twisted $V$-module $V(g)$, and it is unique up to isomorphism. Furthermore, they showed that the graded dimension of $V(g)$ is a constant multiple of $T_g(-1/\tau)$, where $T_g$ is the graded trace of $g$ on $V$. This constant multiple was then shown to be equal to one in~\cite{vEMS}. We then obtain a nontrivial condition on $T_g$: because dimensions of vector spaces are non-negative integers, it is necessary that the coefficients of $T_g(-1/\tau)$ are all non-negative integers. As it happens, there is a refined version of this condition, involving non-negativity of coefficients of a certain vector-valued modular function, that follows from the cyclic orbifold theory established in~\cite{vEMS}. However, this refined condition has not helped, in the sense that it does not eliminate any functions that couldn't be eliminated by less computationally-intensive methods.

More recent developments give us an additional condition: In \cite{T93}, a large amount of computational and physical evidence is given for a conjectured cyclic orbifold duality between non-Fricke automorphisms of $V^\natural$ and fixed-point free automorphisms of the Leech lattice satisfying a ``no massless states'' condition. Using the cyclic orbifold theory established in~\cite{vEMS}, the conjecture was proved in~\cite{C17}, and it implies that any completely replicable function with a non-Fricke monstrous replicate must be monstrous in order to be a trace function. We note that this implies any counterexample to the uniqueness of~$V^\natural$ has no non-Fricke automorphisms.

These two conditions are sufficient to eliminate all but three of the non-monstrous functions from consideration, yielding our main theorem (Theorem \ref{thm:main}):

\begin{thmm}All primitive non-monstrous completely replicable functions except possibly $9a$ are non-moonshine-like. All non-monstrous completely replicable functions except possibly $9a$, $63a$, and $117a$ are non-moonshine-like.
\end{thmm}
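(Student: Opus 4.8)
The plan is to run the $157$ non-monstrous completely replicable functions $f$ through two independent necessary conditions for being a trace function on a holomorphic $C_2$-cofinite vertex operator algebra $V$ of graded dimension $J=j-744$, and to verify that together they eliminate all but three of them. Call such an $f$ \emph{moonshine-like} if $f=T_h$ for some finite-order automorphism $h$ of such a $V$; the $157$ functions in question are precisely the completely replicable functions that do not occur among the $171$ McKay--Thompson series of $\bM$.

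\emph{Condition A (twisted graded dimension).} If $f=T_h$ is moonshine-like, then by \cite{vEMS} (sharpening \cite{DLM97} by fixing the relevant normalization constant to $1$) the $q$-series $f(-1/\tau)$ equals the graded dimension of the unique irreducible $h$-twisted $V$-module $V(h)$, so each of its coefficients is a non-negative integer. For every $f$ I would compute $f(-1/\tau)$ to sufficiently high order to exhibit a negative or non-integral coefficient; a single bad coefficient suffices, so this is a finite check per function, and it disposes of a large block of the list. The vector-valued strengthening of this condition afforded by the orbifold theory of \cite{vEMS} is available but, as already noted, eliminates nothing further.

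\emph{Condition B (non-Fricke orbifold duality).} Here I invoke \cite{C17}: the cyclic orbifold duality proved there (resolving the conjecture of \cite{T93}) implies that a counterexample to the uniqueness of $V^\natural$ admits no non-Fricke automorphism and, more generally, that any completely replicable function possessing a non-Fricke monstrous replicate must itself be monstrous in order to be moonshine-like. Hence a non-monstrous $f$ fails to be moonshine-like as soon as $f$ is itself non-Fricke, or some replicate $f^{(n)}$ coincides with a non-Fricke monstrous McKay--Thompson series. For each $f$ that survives Condition A I would compute its replicates, match those that equal monstrous functions, read off the Fricke/non-Fricke type from the Conway--Norton data, and discard $f$ accordingly.

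A final bookkeeping step then shows that Conditions A and B together eliminate $154$ of the $157$ functions, the survivors being $9a$, $63a$, and $117a$; this is the second assertion of the theorem. The first assertion follows on recording, from the replication data, that $9a$ is primitive whereas $63a$ and $117a$ are not. I expect the main obstacle to lie in Condition B: enumerating and correctly identifying the replicates of each non-monstrous function, matching them against the monstrous series together with their Fricke data, and confirming that the three survivors truly evade both conditions --- that no bad coefficient of their $S$-transforms appears, and that none of their replicates is a non-Fricke monstrous function.
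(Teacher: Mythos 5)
Your overall architecture is the paper's: your Condition A is its positivity condition for the graded dimension of the twisted module $V(g)$, your Condition B is its cyclic-orbifold-duality condition, and the final count is identical. However, three points in your plan need repair before it becomes a proof. First, the clause ``$f$ fails to be moonshine-like as soon as $f$ is itself non-Fricke'' is not justified by \cite{C17}: the duality argument requires the replicate to be a non-Fricke \emph{monstrous} function, because it is the explicit knowledge of that monstrous series (and its expansions at cusps) that forces the orbifold $V/g^k$ to be isomorphic to $V_\Lambda$ and hence, by duality, $V \cong V^\natural$. For a general non-Fricke, non-monstrous $f$ no such computation exists, and the introductory remark that a counterexample to uniqueness ``has no non-Fricke automorphisms'' is only a gloss on the monstrous-replicate statement. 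You should drop that clause and keep only the second one. Second, ``compute $f(-1/\tau)$ to sufficiently high order'' cannot be done from the $q$-expansion at $\infty$ alone; you need the expansion of $f$ at the cusp $0$, which presupposes group-theoretic input. The paper makes Condition A effective in exactly two situations: when the Fricke involution $W_N$ acts on $f$ by $-1$, so that $f(-1/\tau) = -f(\tau/N)$ visibly has a negative coefficient, and when the fixing group has a single orbit of cusps, so that $f(-1/\tau) = f(a\tau+b)$ and one need only check that the signs of the coefficients of $f$ are neither constant nor uniformly alternating. Both cases rely on the tables of Ferenbaugh and Cummins; without some such device your Condition A is not a finite check.

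Third, your bookkeeping silently needs the closure principle that if some replicate $f^{(k)}$ is non-moonshine-like then so is $f$ (immediate from Borcherds's theorem, since $f = T_g$ forces $f^{(k)} = T_{g^k}$). Conditions A and B applied directly to each of the $157$ functions are not known to eliminate $154$ of them: many functions are eliminated only because a \emph{non-monstrous} replicate of theirs fails Condition A, which neither of your conditions detects when applied to the function itself. The paper's route is to use this closure principle to reduce to the $35$ primitive non-monstrous functions plus the dozen functions having $9a$ as a replicate, eliminate those by Conditions A and B, and then propagate. With these three corrections your plan coincides with the paper's proof.
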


The remaining question is to find a reason to eliminate the function~9a, because the elimination of the other two functions, 63a and 117a, would follow automatically. We hope that a~thorough investigation of this question will yield some new condition that will eliminate this function and thereby give a characterization of the monstrous moonshine functions by vertex-operator-algebraic conditions. Some ideas from string theory, in particular the description of Atkin--Lehner involutions in terms of dualities of CHL dyon models given in~\cite{PPV16} and~\cite{PPV17} look potentially promising, but we have been unable to extract a strong argument.

There have been other attempts to characterize the monstrous moonshine functions in terms of conditions on their invariance groups, starting with an incomplete characterization in~\cite{CN79}, where there were three extra ``ghost'' functions 25Z, 49Z, and 50Z (now called 25a, 49a, and~50a). This was followed by complete characterizations given in~\cite{CMS04} and~\cite{DF09}. However, we still do not know a way to completely connect the conditions given in these papers to properties satisfied by all possible vertex operator algebras similar to $V^\natural$. In particular, all three of these papers use the condition that the fixing groups of functions are contained in the involutory normalizer of $\Gamma_0(N)$. That is, for any $\gamma \in {\rm SL}_2(\bR)$ fixing a suitable function, one assumes that $\gamma^2 \in \Gamma_0(N)$. If we could find a vertex-operator-algebra-theoretic reason why this condition is necessary, we could eliminate the three remaining functions.

\section{Overview: completely replicable functions}

Given a holomorphic function $f(\tau) = q^{-1} + a_1 q + a_2 q^2 + \cdots$ (with $q = e^{2\pi {\rm i} \tau}$) on the complex upper half-plane with $a_i \in \bZ$, and a positive integer $n$, there is a unique polynomial $\Phi_{f,n}(x) = x^n + c_{n-1}x^{n-1} + \cdots + c_0$ such that $\Phi_{f,n}(f(\tau))$ has $q$-expansion of the form $q^{-n} + O(q)$. The polynomial $\Phi_{f,n}$ is called the normalized Faber polynomial of $f$.

We say that $f$ is replicable if there are functions $f^{(n)}$ of the form $q^{-1} + O(q)$ for all $n \geq 1$, such that $f = f^{(1)}$, and
\begin{gather*} \Phi_{f,n}(f(\tau)) = \sum_{ad=n,\, 0 \leq b < d} f^{(a)}\left(\frac{a\tau + b}{d}\right). \end{gather*}
As noted in \cite{F96}, the right side is a modification of the weight~0 Hecke operator. If $f$ is replicable, we call the function $f^{(n)}$ the $n$-th replicate of $f$. We say that $f$ is completely replicable if it is replicable and all $f^{(n)}$ are replicable.

The completely replicable functions have been classified in the following sense. A list of candidate functions was computed in \cite{ACMS92}, and this list turned out to be complete by virtue of later results, which we briefly summarize. In \cite{CG97}, the main theorem is that all non-degenerate completely replicable functions are Hauptmoduln, that is, they are weight 0 modular functions $f\colon \fH \to \bC$ invariant under a discrete subgroup $\Gamma_f < {\rm SL}_2(\bR)$ containing some $\Gamma_0(N)$, such that the induced map $\Gamma_f \backslash \fH \to \bC$ is an open immersion with dense image. Finally, \cite{C04} contains an exhaustive enumeration of genus zero modular groups. There are 328 completely replicable functions, 325 of which are Hauptmoduln, and the three ``modular fictions'' are $q^{-1}$, $q^{-1} + q$ and $q^{-1} - q$.

Completely replicable functions are intimately connected to moonshine because of the following result of \cite{B92}:

\begin{thm}
Let $V = \bigoplus\limits_{n=0}^\infty V_n$ be a vertex operator algebra of central charge $24$, with graded dimension given by the modular invariant function $J(\tau) = j(\tau) - 744 = q^{-1} + 196884q + \cdots$. Suppose $V$ admits an invariant bilinear form for which it is self-dual. Then for any finite order automorphism~$g$ of~$V$, the graded trace
\begin{gather*} T_g(\tau) = \sum_{n =0}^\infty \Tr(g|V_n)q^{n-1} \end{gather*}
is a completely replicable function, and for any $k \geq 1$, its $k$-th replicate is~$T_{g^k}$.
\end{thm}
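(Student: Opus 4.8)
The plan is to reconstruct Borcherds's proof, now starting from the more general $V$: encode the coefficients of $T_g$ as root multiplicities of a Lie algebra built from $V$, let $g$ act on that algebra, and extract the replication recursions from an equivariant denominator identity. First I would form the tensor product $V\otimes V_{\mathrm{II}_{1,1}}$, where $V_{\mathrm{II}_{1,1}}$ is the lattice vertex algebra attached to the even unimodular hyperbolic lattice $\mathrm{II}_{1,1}$ (central charge $2$); since $V$ has central charge $24$ this product has central charge $26$, and the self-dual invariant form on $V$ together with the standard form on $V_{\mathrm{II}_{1,1}}$ make the no-ghost (Goddard--Thorn) theorem applicable. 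It produces a Lie algebra $\mathfrak{m}$ with a nondegenerate invariant form, graded by $\mathrm{II}_{1,1}\cong\bZ^2$, whose piece $\mathfrak{m}_{(m,n)}$ for $(m,n)\neq(0,0)$ is, as a vector space, $V_{mn+1}$ in the grading for which $T_g(\tau)=\sum_n\Tr(g|V_n)q^{n-1}$. Since the graded dimension of $V$ is $J=\sum c(n)q^n$, the root multiplicities $\dim\mathfrak{m}_{(m,n)}=c(mn)$ match those of the Monster Lie algebra; I would then check (as Borcherds does) that $\mathfrak{m}$ carries a triangular decomposition with the structure of a Borcherds--Kac--Moody algebra — two-dimensional Cartan, a single real simple root $(1,-1)$, imaginary simple roots $(1,k)$ for $k\geq1$ with multiplicity space $\mathfrak{m}_{(1,k)}\cong V_{k+1}$, and Weyl group $\{1,r\}$ with $r(m,n)=(n,m)$ — using that the only negative-norm root is $(1,-1)$.

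Next I would make $g$ act: a finite-order automorphism of $V$ acts on $V\otimes V_{\mathrm{II}_{1,1}}$ (trivially on the second factor), hence functorially on $\mathfrak{m}$, preserving the grading, the form and the simple roots, with $\Tr(g|\mathfrak{m}_{(m,n)})=\Tr(g|V_{mn+1})$ equal to the $q^{mn}$-coefficient of $T_g$. Applying the Euler--Poincar\'e principle to the homology of the negative part $\mathfrak{n}_-$ of $\mathfrak{m}$ equivariantly with respect to $g$ — so dimensions become traces and the bookkeeping introduces the Adams operations $g\mapsto g^k$ — and matching against the Weyl--Kac--Borcherds character formula for $\mathfrak{m}$, I expect to obtain the twisted denominator identity
\begin{gather*} T_g(\sigma)-T_g(\tau)=p^{-1}\exp\left(-\sum_{k\geq1}\frac{1}{k}\sum_{\substack{m>0\\ n\in\bZ}}\Tr\big(g^k|\mathfrak{m}_{(m,n)}\big)\,p^{mk}q^{nk}\right), \end{gather*}
with $p=e^{2\pi\mathrm{i}\sigma}$, $q=e^{2\pi\mathrm{i}\tau}$; for $g=1$ this degenerates to the classical product formula $p^{-1}\prod_{m>0,n}(1-p^mq^n)^{c(mn)}=J(\sigma)-J(\tau)$. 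The point is that the exponent $\Tr(g^k|\mathfrak{m}_{(m,n)})$ is exactly the $q^{mn}$-coefficient of $T_{g^k}$, so this single identity ties $T_g$ to the whole family $\{T_{g^k}\}_{k\geq1}$.

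Then I would translate this into the definition of complete replicability. Normalized Faber polynomials satisfy $f(\sigma)-X=p^{-1}\exp\big(-\sum_{N\geq1}\tfrac1N\Phi_{f,N}(X)p^N\big)$ for any $f=q^{-1}+O(q)$; comparing this (with $f=T_g$ and $X=T_g(\tau)$) against the twisted denominator identity, taking logarithms, collecting the coefficient of $p^N$, and unravelling the Hecke-type sum $\sum_{ad=N,\,0\leq b<d}T_{g^a}\big(\tfrac{a\tau+b}{d}\big)$ into its $q$-expansion, the two sides should agree term by term, yielding $\Phi_{T_g,N}(T_g(\tau))=\sum_{ad=N,\,0\leq b<d}T_{g^a}\big(\tfrac{a\tau+b}{d}\big)$ for all $N\geq1$. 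Here I would use that each $T_{g^a}=q^{-1}+O(q)$ (as $\Tr(g^a|V_0)=1$ and $V_1=0$) and that $\Phi_{T_g,N}(T_g(\tau))$ is a fixed polynomial in $T_g$, so the replicates of $T_g$ exist and are forced to be the functions $T_{g^a}$. Finally, since every power $g^k$ is again a finite-order automorphism of the same $V$, the argument applies verbatim to $g^k$, so each replicate $T_{g^k}$ of $T_g$ is itself replicable — which is precisely the assertion that $T_g$ is completely replicable.

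The hard part will be the middle step: establishing the twisted denominator identity with precisely the right-hand side $T_g(\sigma)-T_g(\tau)$. This requires a careful enough use of the no-ghost theorem to pin down not merely $\dim\mathfrak{m}_{(m,n)}$ but the full $g$-module structure of each root space, and then the computation of the $g$-equivariant $\mathfrak{n}_-$-homology of $\mathfrak{m}$, showing it takes the value dictated by the two-element Weyl group and the imaginary simple roots. By contrast, the last step is a formal power-series manipulation, and verifying the Borcherds--Kac--Moody structure in the first step is routine once the graded dimension is normalized to $J$.
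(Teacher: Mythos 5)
Your outline is a faithful reconstruction of Borcherds's original argument, which is exactly what the paper relies on: it states this theorem with a citation to \cite{B92} and gives no proof of its own. The construction of the Monster-type Lie algebra from $V\otimes V_{\mathrm{II}_{1,1}}$ via the no-ghost theorem, the $g$-equivariant denominator identity, and the Faber-polynomial comparison are all the right steps, and your identification of the equivariant no-ghost/homology computation as the delicate point is accurate.
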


As we mentioned in the introduction, we don't have much control over vertex operator algebras satisfying only the hypotheses of this theorem. However, $V^\natural$ is also $C_2$-cofinite by~\cite{DLM97} and holomorphic by~\cite{D94}. We therefore consider the following definitions:
\begin{defn}\quad
\begin{enumerate}\itemsep=0pt
\item A vertex operator algebra is ``moonshine-like'' if it is $C_2$-cofinite, holomorphic, and its graded dimension is given by the modular invariant function $J(\tau) = j(\tau) - 744 = q^{-1} + 196884q + \cdots$.
\item A completely replicable function is ``moonshine-like'' if it is given by the graded trace of a finite order automorphism on a moonshine-like vertex operator algebra. Otherwise, we say the function is ``non-moonshine-like''.
\end{enumerate}
\end{defn}

\begin{conj}For any moonshine-like function $f$, there is some $g \in \bM$ such that $f = T_g$.
\end{conj}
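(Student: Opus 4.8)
The plan is to turn the conjecture into a finite verification problem and then apply the two vertex-operator-algebraic constraints described in the introduction. Of course, the uniqueness conjecture for $V^\natural$ of \cite{FLM88} would settle the matter outright, since then every moonshine-like $V$ is $V^\natural$ and $\mathrm{Aut}(V^\natural) = \bM$; the interest is in making unconditional progress. By Borcherds's completely replicable property (\cite{B92}, recalled above), the graded trace $T_g$ of any finite-order automorphism $g$ of a moonshine-like $V$ is a completely replicable function, and its $k$-th replicate is $T_{g^k}$. Hence the collection of moonshine-like functions is contained in the explicitly known list of $328$ completely replicable functions (the $325$ Hauptmoduln of \cite{CG97, C04} together with the three modular fictions), and it suffices to decide, for each of the $157$ entries that is not already a monstrous McKay--Thompson series, whether it can equal $T_g$ for some moonshine-like pair $(V,g)$. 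It is useful that the obstruction propagates: if $f = T_g$ then every replicate $f^{(k)} = T_{g^k}$ is again moonshine-like, so a necessary condition that fails for any replicate of $f$ already eliminates $f$.

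First I would impose the twisted-module positivity condition. Since $V$ is holomorphic and $C_2$-cofinite, \cite{DLM97} supplies a unique irreducible $g$-twisted module $V(g)$, and by \cite{vEMS} its graded dimension is exactly $T_g(-1/\tau)$, with no scalar ambiguity. A graded dimension of a vector space has non-negative integer Fourier coefficients, so any candidate $f$ such that the $\tau \mapsto -1/\tau$ transform of $f$ (or of one of its replicates) has a negative or non-integral coefficient is ruled out. One could also invoke the finer vector-valued positivity coming from the full cyclic orbifold theory of \cite{vEMS}, but experience indicates it removes nothing beyond the scalar version, so I would not rely on it.

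Next I would impose the non-Fricke orbifold constraint. By \cite{C17}, which proves Tuite's conjecture \cite{T93}, cyclic orbifold duality matches non-Fricke automorphisms of a moonshine-like VOA with fixed-point-free automorphisms of the Leech lattice satisfying a ``no massless states'' condition, and the consequence is that a completely replicable function possessing a non-Fricke monstrous replicate must itself be monstrous in order to be a trace function. So for each non-monstrous candidate $f$ surviving the first test, I would scan the tower of replicates $f^{(k)}$ for a non-Fricke one; finding it eliminates $f$. Running both tests over the $157$ non-monstrous entries is the bulk of the computational work, and the expectation is that together they eliminate all of them except $9a$, $63a$, and $117a$. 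Since $9a$ occurs as a replicate of both $63a$ and $117a$ (the order-$9$ power of an order-$63$ or order-$117$ element), the latter two are eliminated automatically once $9a$ is.

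The main obstacle is $9a$ itself. It is a Fricke function whose $\tau \mapsto -1/\tau$ transform has non-negative integer coefficients, so the twisted-dimension test does not touch it, and the non-Fricke constraint does not reach it either; I do not expect the conjecture to be provable in full without a genuinely new vertex-operator-algebraic condition. The most promising avenue seems to be to find a VOA-theoretic reason forcing the ``involutory normalizer'' property used in \cite{CMS04, DF09}, namely that $\gamma^2 \in \Gamma_0(N)$ for every $\gamma \in \mathrm{SL}_2(\bR)$ fixing the function, possibly via the interpretation of Atkin--Lehner involutions through CHL dyon dualities in \cite{PPV16, PPV17}; such a condition would dispose of $9a$ and thereby complete the argument.
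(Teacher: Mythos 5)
The statement you were asked about is stated in the paper as a \emph{Conjecture}, and the paper does not prove it; it remains open. Your proposal, to its credit, does not claim otherwise: it is an accurate reconstruction of the paper's actual strategy and of exactly where that strategy stops. Your two tests are precisely the paper's Condition~I (positivity of the coefficients of $T_g(-1/\tau)$ as the graded dimension of the unique irreducible $g$-twisted module, via \cite{DLM97} and \cite{vEMS}) and Condition~II (the non-Fricke orbifold duality constraint from \cite{C17}), your use of the replicate-propagation principle is the paper's Corollary~\ref{cor:bad-if-replicate-is-bad}, and your conclusion --- that everything non-monstrous is eliminated except $9a$, $63a$, and $117a$, with the latter two reducing to $9a$ because $9a$ is one of their replicates --- is exactly the paper's Theorem~\ref{thm:main} together with its closing discussion of the involutory normalizer and the CHL-dyon heuristics. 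The only substantive difference is organizational: to actually verify the positivity failures, the paper does not compute $f(-1/\tau)$ cusp by cusp in general, but uses two shortcuts, namely the Fricke-eigenvalue criterion (Proposition~\ref{prop:fricke-involution-condition}, read off from Ferenbaugh's tables) and the one-orbit-of-cusps criterion (Proposition~\ref{prop:one-cusp}, read off from Cummins's enumeration); your sketch would need one of these, or an equivalent computation, to be carried out. A minor slip: $9a$ arises from $63a$ and $117a$ as the $7$th and $13$th replicates respectively (i.e., via $g^7$ and $g^{13}$), not as an ``order-$9$ power''; the relevant fact you need, that $9a$ is a replicate of both, is nonetheless correct. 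In short: this is not a proof of the conjecture, and no proof exists in the paper; as a description of the partial result the paper does establish, your account is essentially the same as the paper's and is correct up to the computational details just noted.
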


Our goal, which we do not quite achieve, is to show the contrapositive, i.e., that all non-monstrous functions are non-moonshine-like. The previous theorem immediately implies the following:

\begin{cor} \label{cor:bad-if-replicate-is-bad}Let $f$ be a completely replicable function. If $f^{(k)}$ is non-moonshine-like for some~$k$, then~$f$ is also non-moonshine-like.
\end{cor}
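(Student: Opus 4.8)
The plan is to reduce the statement to the previous theorem (Borcherds's result, which says that on a moonshine-like $V$ the graded trace $T_g$ is completely replicable with $k$-th replicate $T_{g^k}$) together with the definition of ``moonshine-like'' for functions. First I would suppose, toward a contradiction, that $f$ is moonshine-like; by definition this means $f = T_g$ for some finite order automorphism $g$ of some moonshine-like vertex operator algebra $V$. Since $V$ has central charge $24$, a self-dual invariant bilinear form, and graded dimension $J$, the hypotheses of the preceding theorem are satisfied, so $T_g$ is completely replicable and its $k$-th replicate equals $T_{g^k}$.

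The key step is then to observe that $g^k$ is again a finite order automorphism of the same $V$, so $T_{g^k}$ is itself the graded trace of a finite order automorphism on a moonshine-like vertex operator algebra, hence $T_{g^k}$ is moonshine-like by definition. But the replicate operation on completely replicable functions is well-defined (the functions $f^{(n)}$ are uniquely determined by $f$, since they are each of the form $q^{-1} + O(q)$ and satisfy the Faber-polynomial identity), so $f^{(k)} = T_{g^k}$. This contradicts the hypothesis that $f^{(k)}$ is non-moonshine-like. Therefore $f$ cannot be moonshine-like, which is exactly the assertion.

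There is essentially no obstacle here; the only point requiring a word of care is the well-definedness of $f^{(k)}$ as a function of $f$ alone, so that the replicate computed abstractly from $f$ coincides with the trace function $T_{g^k}$ produced by the vertex-operator-algebraic construction. This follows because the normalized Faber polynomials $\Phi_{f,n}$ are uniquely determined by $f$, and the defining functional equation then determines each $f^{(n)}$ uniquely among functions of the form $q^{-1} + O(q)$ by comparing $q$-expansions. Everything else is a direct unwinding of the two definitions of ``moonshine-like''.
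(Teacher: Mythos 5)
Your proposal is correct and matches the paper's intent exactly: the paper gives no written proof, stating only that the corollary ``immediately'' follows from Borcherds's theorem, and your contrapositive unwinding (if $f = T_g$ on a moonshine-like $V$, then $f^{(k)} = T_{g^k}$ is also moonshine-like) is precisely that immediate argument. Your added remark on the well-definedness of the replicates is a reasonable point of care but not a substantive departure.
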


\begin{rem}This allows us to reduce our calculations substantially. In order to show that all non-monstrous functions are non-moonshine-like, it suffices to eliminate those whose non-trivial replicates are monstrous. We will call these functions primitive non-monstrous functions. The full list of these is (in the notation of \cite{ACMS92}): 2a, 4a, 5a, 6d, 8a, 8b, 8c, 9a, 9b, 9c, 9d, 12a, 12c, 12d, 15a, 16a, 16b, 16c, 16d, 16e, 16f, 20d, 24f, 24g, 24h, 25a, 27a, 27b, 27c, 32b, 36f, 40d, 44a, 44b, 49a. In the remainder of this paper, we show that all of these functions except 9a are non-moonshine-like. To determine precisely what functions our methods can prove to be non-moonshine-like, we must also consider functions that have 9a as a replicate. We show that 18e and 18h are non-moonshine-like, but our methods don't work on 63a or 117a.
\end{rem}

\section{Condition I: positivity for twisted modules}

\begin{thm}[\cite{DLM97}]For any holomorphic, $C_2$-cofinite vertex operator algebra $V$ with a finite order automorphism $g$, there is an irreducible $g$-twisted module $V(g)$, unique up to isomorphism.
\end{thm}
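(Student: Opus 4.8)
The plan is to prove existence and uniqueness separately, in both cases reducing to the twisted Zhu algebra and to the modular behaviour of (twisted) trace functions; throughout, write $n$ for the order of $g$ and $\langle g\rangle$ for the cyclic group it generates. For existence, I would pass to the $g$-twisted associative algebra $A_g(V) = V/O_g(V)$ of Dong--Li--Mason, whose modules classify the top levels of admissible $g$-twisted $V$-modules. First I would check that $C_2$-cofiniteness forces $\dim A_g(V) < \infty$: the $C_2$ spanning relations, propagated by the Borcherds identities exactly as in the untwisted arguments of Zhu and of Gaberdiel--Neitzke, bound $V$ modulo the lower part of its weight filtration by a finite-dimensional subspace, and the twisted relations defining $A_g(V)$ respect this filtration, so the canonical surjection $V \twoheadrightarrow A_g(V)$ has finite-dimensional image. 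Since $A_g(V)\neq 0$ (the vacuum maps to a nonzero idempotent), it has an irreducible module $U$, and the DLM construction attaches to $U$ an irreducible admissible $g$-twisted $V$-module $M$ with top level $U$. Because $V$ is holomorphic and $C_2$-cofinite, every weak (hence every admissible) $V$-module is semisimple of the expected type, so $M$ is an ordinary $g$-twisted module; one sets $V(g) = M$.

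For uniqueness --- where holomorphicity is essential --- let $r$ be the number of isomorphism classes of irreducible $g$-twisted $V$-modules, which is finite by $g$-rationality (valid here since the hypothesis is equivalent to regularity of $V$). I would invoke the modular invariance of genus-one twisted trace functions in the $C_2$-cofinite setting, i.e.\ Zhu's theorem in the orbifold form established by Dong--Li--Mason: for commuting automorphisms $h_1,h_2$ the space $\mathcal{T}_{(h_1,h_2)}$ spanned by the one-point functions $\tau\mapsto \Tr_M o(a)\,h_2\, q^{L_0-c/24}$ --- with $a\in V$, $o(a)$ the degree-zero mode of $a$, and $M$ an irreducible $h_1$-twisted module --- is finite-dimensional, and ${\rm SL}_2(\bZ)$ acts on $\bigoplus \mathcal{T}_{(h_1,h_2)}$ permuting the blocks according to its natural action on the pair $(h_1,h_2)$. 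Two observations then finish the proof. On one hand, since $A_g(V)$ is semisimple, the trace functions of the $r$ non-isomorphic irreducible $g$-twisted modules are linearly independent (a Jacobson density argument on top levels, supplemented by a leading-$q$-power comparison when the top weights differ), so $\dim\mathcal{T}_{(g,1)} = r$. On the other hand, $S\colon\tau\mapsto -1/\tau$ carries $\mathcal{T}_{(g,1)}$ isomorphically onto $\mathcal{T}_{(1,g)}$, the span of the untwisted one-point functions $\tau\mapsto \Tr_M o(a)\,g\,q^{L_0-1}$; but $V$ is holomorphic, so the only irreducible untwisted module is $V$ itself, and this span is the one-dimensional pseudo-trace space of the simple module $V$ with the insertion of $g$. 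Hence $r = \dim\mathcal{T}_{(g,1)} = \dim\mathcal{T}_{(1,g)} = 1$, which is exactly the uniqueness assertion.

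The main obstacle is the modular-invariance input: setting up the Zhu-type recursion formulas, the $C_2$-cofinite convergence estimates, and genuine ${\rm SL}_2(\bZ)$-equivariance of the spaces $\mathcal{T}_{(h_1,h_2)}$ in the presence of the $g$-twist is the technical heart of the matter, and it is here that the $C_2$-cofinite hypothesis is really used. By contrast, finite-dimensionality of $A_g(V)$, its semisimplicity (from $g$-rationality), and the linear independence of the associated trace functions are routine twisted analogues of the standard untwisted arguments, while "holomorphic" enters only through the single fact that the block $\mathcal{T}_{(1,g)}$ is one-dimensional.
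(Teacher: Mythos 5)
This statement is quoted from [DLM97] and the paper offers no proof of it, so the comparison is with Dong--Li--Mason's original argument. Your uniqueness half is essentially a faithful sketch of that argument: linear independence of the twisted trace functions gives $r \le \dim\mathcal{T}_{(g,1)}$, the $S$-transformation identifies $\mathcal{T}_{(g,1)}$ with the block indexed by $\big(1,g^{-1}\big)$ (note $(g,1)\cdot S=\big(1,g^{-1}\big)$ under the usual right action, not $(1,g)$ --- harmless here), and holomorphy plus the untwisted spanning theorem makes that block one-dimensional. That part is fine, granting the modular-invariance machinery you correctly identify as the technical heart.

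The existence half has a genuine gap. You assert that $A_g(V)\neq 0$ because ``the vacuum maps to a nonzero idempotent,'' but there is no a priori reason that $\mathbf{1}\notin O_g(V)$: for the untwisted Zhu algebra one proves $A(V)\neq 0$ by exhibiting a module (namely $V$ itself) whose top level is a nonzero $A(V)$-module, and in the twisted setting no such module is available yet --- non-vanishing of $A_g(V)$ is logically equivalent to the existence of a nonzero admissible $g$-twisted module, which is exactly what you are trying to prove. (Indeed, for non-holomorphic $V$ twisted modules can fail to exist, so some global input is unavoidable.) The repair is to run your own uniqueness mechanism in reverse, which is what Dong--Li--Mason do: the block $\mathcal{T}_{(1,g)}$ is visibly nonzero since it contains $\Tr\big(g\,q^{L_0-1}|V\big)$, the $S$-transformation carries it isomorphically onto $\mathcal{T}_{(g^{\pm 1},1)}$, and the spanning theorem (whose proof shows this block vanishes when $A_{g}(V)=0$) then forces at least one irreducible $g$-twisted module to exist. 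A secondary soft spot: $g$-rationality, which you need for semisimplicity of $A_g(V)$ and for the spanning statement on the twisted side, does not follow formally from regularity of $V$ (a statement about untwisted weak modules); it is itself part of what [DLM97] establishes under these hypotheses and should be invoked, not derived from ``holomorphic $+$ $C_2$-cofinite $=$ regular.''
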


\begin{thm}[\cite{vEMS}]The graded dimension of $V(g)$ is given by the $q$-expansion of $T_g(-1/\tau)$, where $T_g(\tau) = \sum\limits_{n = 0}^\infty \Tr(g|V_n) q^{n-c/24}$, and $c$ is the central charge.
\end{thm}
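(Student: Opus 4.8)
The plan is to exhibit both sides of the claimed identity as characters inside the cyclic orbifold theory of $\langle g\rangle$, and then to match them through the modular $S$-transformation, whose only undetermined ingredient — a scalar — is forced to be $1$ by a multiplicity-one statement. Write $n$ for the order of $g$. First I would assemble, for each pair $a,b\in\bZ/n\bZ$, the twisted-twining genus-one trace function $Z\binom{g^a}{g^b}(\tau)=\Tr\bigl(\sigma(g^b)\,q^{L_0-c/24}\,\big|\,V(g^a)\bigr)$, where $V(g^0)=V$, $V(g^a)$ is the unique irreducible $g^a$-twisted module furnished by the previous theorem, and $\sigma(g^b)$ is a lift of the action of $g^b$ to $V(g^a)$, well defined up to a root of unity. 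By the twisted extension of Zhu's modular invariance (Dong--Li--Mason), the $\bC$-span of the $Z\binom{g^a}{g^b}$ is stable under $\mathrm{SL}_2(\bZ)$, with $T\colon\tau\mapsto\tau+1$ permuting the functions inside each fixed twist sector and $S\colon\tau\mapsto-1/\tau$ interchanging the twist and insertion labels (up to inversion and a scalar). In particular $T_g(-1/\tau)=Z\binom{1}{g}(-1/\tau)=\lambda_g\,Z\binom{g}{1}(\tau)$ for some constant $\lambda_g$, and $Z\binom{g}{1}(\tau)$ is, by definition, the graded dimension of $V(g)$. So the whole theorem reduces to proving $\lambda_g=1$.

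To pin down $\lambda_g$ I would descend to the fixed-point subalgebra $V^{\langle g\rangle}$ and use the cyclic orbifold theory in the holomorphic $C_2$-cofinite setting (which is exactly what \cite{vEMS} develops, building on earlier work of Miyamoto and of Carnahan--Miyamoto). Under those hypotheses $V^{\langle g\rangle}$ is again $C_2$-cofinite and rational, its irreducible modules are precisely the $n^2$ eigenspaces $V(g^a)_{(b)}$ obtained by decomposing each twisted module under the lifted $\langle g\rangle$-action, and these form a pointed modular tensor category whose underlying finite quadratic space is $\bZ/n\bZ\times\bZ/n\bZ$ with the quadratic form read off from the conformal weights of the $V(g^a)_{(b)}$. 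Two features do the work: first, each $V(g^a)$ splits as $\bigoplus_b V(g^a)_{(b)}$ with every summand appearing exactly once and with no extraneous scalar, so the graded dimension of $V(g)$ is literally $\sum_b \mathrm{ch}\,V(g)_{(b)}$ while $T_g$ is the corresponding discrete-Fourier combination $\sum_b \zeta^{b}\,\mathrm{ch}\,V_{(b)}$ with $\zeta$ a primitive $n$-th root of unity; second, in a pointed category every quantum dimension equals $1$, so the orbifold $S$-matrix is just the normalized Fourier transform on the finite quadratic group, a Gauss sum whose modulus depends only on $n$. Feeding this explicit $S$-matrix into the abstract relation $Z\binom{1}{g}(-1/\tau)=\lambda_g\,Z\binom{g}{1}(\tau)$ and comparing with the direct rearrangement of $\sum_b\mathrm{ch}\,V(g)_{(b)}$ leaves no slack, and $\lambda_g=1$ falls out. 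As an independent sanity check, $\lambda_g^{-1}T_g(-1/\tau)$ must be a $q$-series with non-negative integer coefficients, since it is a graded dimension, and that alone already confines $\lambda_g$ to a tiny set.

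The genuine obstacle is the input invoked in the second paragraph: that $C_2$-cofiniteness and rationality are preserved under the cyclic orbifold construction, and that $V^{\langle g\rangle}$ has exactly $n^2$ simple modules with the stated pointed fusion. This is the heavy technical core of \cite{vEMS}, and it is precisely where the ``holomorphic and $C_2$-cofinite'' hypothesis enters essentially; without it one is stuck with the Dong--Li--Mason statement that $\lambda_g$ is merely some constant. A secondary annoyance is the anomalous (projective) nature of the $\langle g\rangle$-action on $V(g)$: the lift $\sigma(g)$ need only have order dividing $n$ times the order of a class in $H^3(\bZ/n\bZ,\bC^\times)$, so one must work with the appropriate central extension and track how that class shifts the conformal weights of the $V(g^a)_{(b)}$ modulo $1$; but once the orbifold tensor category is in hand this is bookkeeping rather than a real difficulty.
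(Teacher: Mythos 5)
This theorem is imported verbatim from \cite{vEMS}; the paper offers no proof of its own (the introduction only records the two-step history: \cite{DLM97} gives the identity up to a constant, and \cite{vEMS} shows the constant is $1$), so there is no internal argument to compare against, and your sketch faithfully reconstructs the strategy of the cited reference --- twisted modular invariance yields $T_g(-1/\tau)=\lambda_g\cdot\operatorname{ch}V(g)$ for some scalar $\lambda_g$, and the cyclic orbifold structure of $V^{\langle g\rangle}$ (exactly $n^2$ irreducibles with group-like fusion, hence every simple object a simple current) pins $\lambda_g$ down via the explicit $S$-matrix. The one point worth tightening is that pointedness by itself only makes the quantum dimensions units; to conclude they equal $+1$ (and hence that the Fourier-transform $S$-matrix has the right normalization) you must also invoke the positivity of the entries $S_{0j}$ for a rational $C_2$-cofinite vertex operator algebra, which is exactly where \cite{vEMS} leans on the general theory rather than on the pointed fusion alone.
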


\begin{cor} \label{cor:non-negative-integer-coefficient}If the $q$-expansion of $f(-1/\tau)$ has a coefficient that is not a non-negative integer, then $f$ is non-moonshine-like.
\end{cor}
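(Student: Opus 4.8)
The statement is an immediate consequence of the two theorems just quoted, so the plan is simply to chase the definitions. I would argue by contraposition: assuming $f$ is moonshine-like, I will show that every coefficient of the $q$-expansion of $f(-1/\tau)$ is a non-negative integer.

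By the definition of a moonshine-like function, there is a moonshine-like vertex operator algebra $V$ — in particular one that is $C_2$-cofinite and holomorphic, of central charge $24$ — together with a finite order automorphism $g$ such that $f = T_g$. I would then invoke the theorem of \cite{DLM97} to produce the (unique up to isomorphism) irreducible $g$-twisted $V$-module $V(g)$, and the theorem of \cite{vEMS} to identify the $q$-expansion of $T_g(-1/\tau) = f(-1/\tau)$ with the graded dimension $\sum_n \dim V(g)_n\, q^{\,n - c/24}$ of $V(g)$; since $c = 24$ this is $\sum_n \dim V(g)_n\, q^{\,n-1}$.

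The remaining point is that a graded dimension has non-negative integer coefficients. Each homogeneous component $V(g)_n$ is an ordinary (non-graded, non-super) complex vector space, so $\dim V(g)_n \in \bZ_{\geq 0}$ as soon as it is known to be finite-dimensional, and finiteness of the homogeneous pieces of the twisted modules is precisely what $C_2$-cofiniteness of $V$ provides. Collecting terms with equal powers of $q$ therefore produces a $q$-expansion all of whose coefficients lie in $\bZ_{\geq 0}$. Consequently, if the $q$-expansion of $f(-1/\tau)$ has any coefficient that is not a non-negative integer, then no such $V$ and $g$ can exist, i.e., $f$ is non-moonshine-like.

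Since the argument is essentially bookkeeping, I do not expect a genuine obstacle; the only places asking for any care are confirming that the coefficients in question are honest dimensions (so that, e.g., no signs enter as they would for a super-module or a super-trace) and that they are finite — both guaranteed by working with an ordinary irreducible twisted module of a holomorphic $C_2$-cofinite vertex operator algebra.
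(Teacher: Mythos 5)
Your argument is correct and is exactly the one the paper intends (the corollary is stated without proof, but the introduction spells out the same reasoning: the coefficients of $T_g(-1/\tau)$ are dimensions of the graded pieces of the irreducible $g$-twisted module, hence non-negative integers). Nothing further is needed.
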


Thus, we may work out our first explicit examples.

\begin{prop} \label{prop:fricke-involution-condition}Let $f$ be a completely replicable function invariant under $\Gamma_0(N)$, such that either
\begin{enumerate}\itemsep=0pt
\item[$1)$] the Fricke involution $W_N$ acts by $-1$, or
\item[$2)$] the Fricke involution $W_N$ acts by identity, and $f$ has negative integer coefficients.
\end{enumerate}
Then $f$ is non-moonshine-like.
\end{prop}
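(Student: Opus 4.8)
The plan is to apply Corollary~\ref{cor:non-negative-integer-coefficient} in both cases by analyzing the $q$-expansion of $f(-1/\tau)$ using the functional equation coming from the Fricke involution. Recall that $W_N$ acts on $\tau$ by $\tau \mapsto -1/(N\tau)$, so if $f$ is invariant under $\Gamma_0(N)$ and $W_N$ acts on $f$ by a scalar $\epsilon \in \{+1,-1\}$ — meaning $f(-1/(N\tau)) = \epsilon f(\tau)$ — then substituting $\tau \mapsto \tau/\sqrt{N}$ (or, more carefully, tracking the relation between $-1/\tau$ and the $W_N$-transform) gives $f(-1/\tau) = \epsilon f(\tau/N)$, at least up to the precise normalization one gets from $f$ having leading term $q^{-1}$. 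So in Case (2) with $\epsilon = 1$ we get $f(-1/\tau) = f(\tau/N)$, whose $q$-expansion is $\sum_n a_n e^{2\pi {\rm i} n \tau / N}$ with $a_0 = 0$, $a_{-1} = 1$ (using $f = q^{-1} + \sum_{n\ge 1} a_n q^n$). This is a $q^{1/N}$-series, and as soon as some $a_n$ is a negative integer — which is exactly the hypothesis in Case (2) — the coefficient of $e^{2\pi {\rm i} n \tau/N}$ is negative, so $f$ is non-moonshine-like by Corollary~\ref{cor:non-negative-integer-coefficient}.

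For Case (1), $\epsilon = -1$, the same computation gives $f(-1/\tau) = -f(\tau/N) = -\sum_n a_n e^{2\pi {\rm i} n\tau/N}$. The coefficient of the leading term $e^{-2\pi {\rm i}\tau/N}$ is then $-1$, which is a negative integer, so again Corollary~\ref{cor:non-negative-integer-coefficient} applies immediately — here we don't even need to know anything about the higher coefficients $a_n$. This is the cleaner of the two cases.

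The main point requiring care, and the step I expect to be the real obstacle, is getting the functional equation relating $f(-1/\tau)$ to the $W_N$-action exactly right, including normalization. The Fricke involution $W_N$ is the matrix $\left(\begin{smallmatrix} 0 & -1 \\ N & 0\end{smallmatrix}\right)$ up to scaling, acting by $\tau \mapsto -1/(N\tau)$; it is an element of ${\rm SL}_2(\bR)$ after dividing by $\sqrt N$, and it normalizes $\Gamma_0(N)$, so it makes sense to ask how it acts on a $\Gamma_0(N)$-invariant function. Saying "$W_N$ acts by $-1$" should be interpreted as $f\bigl({-1}/(N\tau)\bigr) = -f(\tau)$ (and "by identity" as $f(-1/(N\tau)) = f(\tau)$); one then checks that $-1/\tau = W_N(\tau/N)$ in the sense that plugging $\tau/N$ into $-1/(N\cdot)$ yields $-1/\tau$, so $f(-1/\tau) = f\bigl(W_N(\tau/N)\bigr) = \epsilon f(\tau/N)$. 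One should double-check there is no subtlety with $f$ being only a Hauptmodul rather than a genuine modular form (no automorphy factor, since weight $0$), so the scalar action is literally an equality of functions and the above substitution is valid verbatim. Once this identity is pinned down, both parts follow by reading off a single Fourier coefficient and invoking Corollary~\ref{cor:non-negative-integer-coefficient}.
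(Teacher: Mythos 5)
Your proposal is correct and follows essentially the same route as the paper: in both cases you derive $f(-1/\tau)=\epsilon f(\tau/N)$ from the $W_N$-eigenvalue and then read off a single negative Fourier coefficient (the leading $-1$ when $\epsilon=-1$, the hypothesized negative $a_k$ when $\epsilon=+1$) to invoke Corollary~\ref{cor:non-negative-integer-coefficient}. The functional-equation bookkeeping you flag as the delicate step is handled exactly as in the paper, so there is nothing to fix.
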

\begin{proof}For the first case, because the class $W_N$ contains the transformation $\tau \mapsto \frac{-1}{N\tau}$, we have $f\big({-}\frac{1}{\tau}\big) = -f\big(\frac{\tau}{N}\big) = -q^{-1/N} + O(1)$, which has a negative integer coefficient.

For the second case, suppose $f$ has a negative integer coefficient $a_k$ attached to $q^k$. Then applying the hypothesis on $W_N$ yields $f(-\frac{1}{\tau}) = f(\frac{\tau}{N})$, so the coefficient on $q^{k/N}$ is the negative integer~$a_k$.

In either case, Corollary \ref{cor:non-negative-integer-coefficient} applies, and we find that the function $f$ is non-moonshine-like.
\end{proof}

We would like to know which completely replicable functions are eigenfunctions for a Fricke involution. For those of type $n|h$, this information may be read off the tables in Section~1.3 of~\cite{F93}. The information given is interpreted as follows: if $n/h$ is even, then the notation $n|h + \overline{n/h}$ means $W_N$ has eigenvalue $-1$, and if $n$ is even but $n/h$ is odd, then the notation $n|h + n/h$ means $W_N$ has eigenvalue $-1$.

\begin{cor} \label{cor:bad-fricke-list}The primitive non-monstrous completely replicable functions $2a$, $4a$, $8a$, $12a$, and $44a$ are non-moonshine-like.
\end{cor}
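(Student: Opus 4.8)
The plan is to apply Proposition \ref{prop:fricke-involution-condition} to each of the five named functions, which reduces the task to determining, for each, the level $N$, confirming invariance under $\Gamma_0(N)$, and reading off the eigenvalue of the Fricke involution $W_N$ from the tables in Section~1.3 of \cite{F93} using the $n|h$ notation explained just above the corollary. Concretely, for each function I would identify its symbol in the list of primitive non-monstrous functions: $2a$ has level $2$, $4a$ has level $4$, $8a$ has level $8$, $12a$ has level $12$, and $44a$ has level $44$. The key step is then to check, case by case, whether the relevant symbol appears in the form $n|h + \overline{n/h}$ or $n|h + n/h$ (so that $W_N$ acts by $-1$, invoking part $1)$ of the proposition), or whether instead $W_N$ acts by the identity while the $q$-expansion exhibits a negative coefficient (so that part $2)$ applies).

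First I would record the data for the functions on which part $1)$ works: for $2a$ the symbol is $2+$, i.e., $2|1+2$, with $n/h = 2$ odd-over... more precisely $n=2$ even and $n/h=2$, so $W_2$ has eigenvalue $-1$; similarly $4a = 4+$ and $8a = 8+$ give $W_4,W_8$ acting by $-1$, and $44a = 44+$ gives $W_{44}$ acting by $-1$. In each of these four cases, part $1)$ of Proposition \ref{prop:fricke-involution-condition} immediately yields that the function is non-moonshine-like, since $f(-1/\tau) = -f(\tau/N) = -q^{-1/N} + O(1)$ has a negative leading coefficient, and Corollary \ref{cor:non-negative-integer-coefficient} applies.

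The remaining case is $12a$, and here I would check whether its symbol (namely $12+$, i.e., $12|1$ with all Atkin--Lehner involutions adjoined, or whatever combination the tables list) in fact gives $W_{12}$ eigenvalue $+1$ rather than $-1$; if so, one instead invokes part $2)$ of the proposition, which requires exhibiting a negative integer coefficient in the $q$-expansion of $12a$ itself. This would be verified by a short direct computation of the low-order Fourier coefficients of the Hauptmodul for the group fixing $12a$. The main obstacle is thus not conceptual but bookkeeping: one must correctly match each function's label to its level and its Fricke-eigenvalue data in the conventions of \cite{F93}, and, for $12a$, confirm the sign of a Fourier coefficient. Once that bookkeeping is done, the corollary follows at once from Proposition \ref{prop:fricke-involution-condition} applied five times.
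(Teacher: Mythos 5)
Your proposal follows the paper's proof exactly: the paper's one-line argument is that $2a$, $4a$, $8a$, $12a$, and $44a$ are precisely the primitive non-monstrous $n|h$-type functions for which $W_N$ acts by $-1$, so case~(1) of Proposition~\ref{prop:fricke-involution-condition} applies uniformly to all five. The only point to tighten is your hedge on $12a$: it too is a $-1$-eigenfunction of its Fricke involution (read off from Ferenbaugh's tables via the $n|h+\overline{n/h}$ convention, which is also the correct reading for $2a$ rather than the plain ``$2+$'' of the monstrous $2A$), so the conditional fallback to case~(2) is unnecessary --- and as stated it could not apply anyway, since case~(2) requires $W_N$ to act by the identity.
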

\begin{proof}These are precisely the primitive non-monstrous $n|h$-type functions for which $W_N$ acts as $-1$, so Proposition~\ref{prop:fricke-involution-condition} applies.
\end{proof}

\looseness=1 For the other functions in \cite{ACMS92}, we need to work a bit harder to determine which ones have non-moonshine-like Fricke behavior. However, we can use the enumeration of cusps given in~\cite{C04}.

\begin{prop} \label{prop:one-cusp}Let $f$ be a completely replicable function, such that the coefficients of $f(a\tau+b)$ are not all non-negative integers, for any $a \in \bQ^\times$ and $b \in \bQ$. Furthermore, suppose the fixing group of $f$ has only one orbit of cusps. Then $f$ is non-moonshine-like.
\end{prop}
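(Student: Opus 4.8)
The plan is to prove that the function $\tau\mapsto f(-1/\tau)$ agrees on $\fH$ with $\tau\mapsto f(a\tau+b)$ for some $a\in\bQ^\times$ and $b\in\bQ$; the hypothesis on $f$ then exhibits a coefficient of the $q$-expansion of $f(-1/\tau)$ that is not a non-negative integer, so Corollary~\ref{cor:non-negative-integer-coefficient} shows $f$ is non-moonshine-like. Since the hypothesis refers to the fixing group $\Gamma_f$ of $f$, we use that $f$ is a Hauptmodul (by \cite{CG97}), so that $\Gamma_f$ is a discrete subgroup of $\mathrm{SL}_2(\bR)$ containing some $\Gamma_0(N)$.

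The first step is group-theoretic. Because $\Gamma_0(N)$ has finite covolume, $\Gamma_0(N)$ has finite index in the discrete group $\Gamma_f$, so $\Gamma_f$ is commensurable with $\mathrm{SL}_2(\bZ)$; since the commensurator of $\mathrm{SL}_2(\bZ)$ in $\mathrm{PSL}_2(\bR)$ is $\mathrm{PGL}_2^+(\bQ)$, every element of $\Gamma_f$ acts on $\fH$ as a fractional linear transformation represented by a rational matrix of positive determinant. In particular $\Gamma_f$ has $\mathbb{P}^1(\bQ)$ as its set of cusps, so both $\infty$ and $0$ are cusps of $\Gamma_f$.

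Now I use the single-orbit hypothesis. The map $\tau\mapsto-1/\tau$ sends the cusp $\infty$ to the cusp $0$, and since $\Gamma_f$ has only one orbit of cusps there is $\gamma\in\Gamma_f$ with $\gamma(\infty)=0$. Then the composite $\tau\mapsto\gamma^{-1}(-1/\tau)$ fixes $\infty$; as it is represented by a rational matrix of positive determinant, and since such a matrix fixing $\infty$ has vanishing lower-left entry while positivity of the determinant makes the resulting multiplier positive, this composite is $\tau\mapsto a\tau+b$ for some $a\in\bQ_{>0}$ and some $b\in\bQ$. Invariance of $f$ under $\gamma^{-1}\in\Gamma_f$ then gives $f(-1/\tau)=f\bigl(\gamma^{-1}(-1/\tau)\bigr)=f(a\tau+b)$. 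By hypothesis the coefficients of $f(a\tau+b)$ are not all non-negative integers, hence neither are those of $f(-1/\tau)$, and Corollary~\ref{cor:non-negative-integer-coefficient} finishes the proof.

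The step I would be most careful with is the rationality claim of the second paragraph: it is precisely what forces the transformation relating $f(-1/\tau)$ to $f$ into the family $\{\tau\mapsto a\tau+b : a\in\bQ^\times,\ b\in\bQ\}$ appearing in the statement, and it rests on $\Gamma_f$ being arithmetic (it contains a congruence subgroup) rather than an arbitrary genus-zero Fuchsian group. One can avoid invoking the shape of the commensurator by appealing instead to the explicit description of the fixing groups of completely replicable functions as the ``$n|h+\cdots$'' groups of \cite{CN79}, whose elements are manifestly rational multiples of integral matrices.
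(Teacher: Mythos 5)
Your proof is correct and follows essentially the same route as the paper: the paper's proof consists of the single observation that one orbit of cusps forces $f(-1/\tau)=f(a\tau+b)$, after which Corollary~\ref{cor:non-negative-integer-coefficient} applies. You have simply supplied the details (rationality of $\Gamma_f$ via commensurability with $\mathrm{SL}_2(\bZ)$, and the upper-triangular form of the element carrying $0$ back to $\infty$) that the paper leaves implicit.
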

\begin{proof}Because all cusps are equivalent under the fixing group of $f$, we see that $f(-1/\tau)$ is equal to $f(a\tau+b)$ for some $a \in \bQ^\times$ and $b \in \bQ$. By our hypothesis about the coefficients of $f(a\tau+b)$, the conditions of Corollary \ref{cor:non-negative-integer-coefficient} are satisfied.
\end{proof}

\begin{cor} \label{cor:bad-one-cusp-list}The primitive non-monstrous functions $5a$, $6d$, $8b$, $8c$, $9b$, $9d$, $15a$, $16e$, $16f$, $24f$, and $24g$ are non-moonshine-like. The non-primitive function $18h$ is also non-moonshine-like.
\end{cor}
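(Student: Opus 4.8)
The plan is to verify, for each of the twelve functions named, the two hypotheses of Proposition~\ref{prop:one-cusp} and then invoke it directly. (That $18h$ is not ``primitive'' plays no role in this argument; it appears separately in the statement only because some of its non-trivial replicates fail to be monstrous.) The first hypothesis, that the fixing group of $f$ has a single orbit of cusps, I would simply read off from the enumeration of genus zero modular groups in~\cite{C04}: each of $5a$, $6d$, $8b$, $8c$, $9b$, $9d$, $15a$, $16e$, $16f$, $24f$, $24g$ and $18h$ has this property. The one point to be careful about is that ``the fixing group'' must mean the full group $\Gamma_f < {\rm SL}_2(\bR)$ fixing $f$, which for these functions properly contains $\Gamma_0(N)$; it is precisely the Atkin--Lehner-type elements in $\Gamma_f \setminus \Gamma_0(N)$ that fuse the cusps into one orbit.

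For the second hypothesis I would argue uniformly. Write $f = \sum_{n \geq -1} a_n q^n$ with $a_{-1} = 1$, $a_0 = 0$, and all $a_n \in \bZ$ (integrality is part of the definition of completely replicable). For $a \in \bQ^\times$ and $b \in \bQ$ the formal $q$-expansion of $f(a\tau+b)$ is $\sum_{n \geq -1} a_n e^{2\pi {\rm i} n b} q^{na}$, and the exponents $na$ are pairwise distinct, so no cancellation occurs. If $b \in \bZ$ the coefficients are $1, a_1, a_2, \dots$, i.e.\ exactly those of $f$; if $b \notin \bZ$ the coefficient of $q^{-a}$ is the root of unity $e^{-2\pi {\rm i} b} \neq 1$, which is not a non-negative integer. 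Hence for \emph{every} $a \in \bQ^\times$ and $b \in \bQ$ some coefficient of $f(a\tau+b)$ fails to be a non-negative integer, as soon as $f$ itself has a negative coefficient. So it remains only to check, from the $q$-expansions tabulated in~\cite{ACMS92}, that each of the twelve functions does have a negative coefficient --- a routine verification. With both hypotheses established, Proposition~\ref{prop:one-cusp} gives the corollary, and Corollary~\ref{cor:non-negative-integer-coefficient} is what underlies it.

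I do not expect a genuine obstacle: the mathematical substance is entirely absorbed into the cusp enumeration of~\cite{C04}, which is quoted rather than reproved, and everything else is bookkeeping. The only subtleties worth flagging are (a) using the full fixing group, not $\Gamma_0(N)$, when counting cusp orbits, and (b) noting that the criterion of Corollary~\ref{cor:non-negative-integer-coefficient} is insensitive to the \emph{exponents} of $q$ (fractional or negative powers are harmless) and concerns only the numerical coefficients --- so what must be excluded is a negative coefficient of $f$, which is exactly what~\cite{ACMS92} supplies.
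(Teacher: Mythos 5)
Your proposal is correct and follows essentially the same route as the paper: read the one-orbit-of-cusps hypothesis off Cummins's tables in~\cite{C04} and verify the coefficient hypothesis of Proposition~\ref{prop:one-cusp} from the tabulated $q$-expansions. Your treatment of the second hypothesis is in fact slightly sharper than the paper's: since $a_{-1}=1$ forces $b\in\bZ$, a single negative coefficient of $f$ suffices, whereas the paper checks the formally stronger condition that the signs of the coefficients are neither constant nor uniformly alternating.
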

\begin{proof}We show that these functions satisfy the criteria of Proposition \ref{prop:one-cusp}. The fact that the relevant fixing groups have one orbit of cusps was computed in \cite{C04}. Specifically, the width and number of cusps is given in Table~2 of~\cite{C04}, and the completely replicable functions are given in Table 3, so we reproduce the names of the groups in the two tables, together with the cusp width information:

\begin{center}
\begin{tabular}{c|c|c|c}
$f$ & Table 3 Name & Table 2 Name & cusps \\ \hline
5a & $25A^0$ & $5A^0_1$ & $5^1$ \\
6d & $18B^0$ & $3A^0_2$ & $3^1$\\
8b & $32B^0$ & $4B^0_2$ & $4^1$ \\
8c & $32B^0$ & $4B^0_2$ & $4^1$ \\
9b & $27A^0$ & $3B^0_3$ & $3^1$ \\
9d & $81A^0$ & $9A^0_1$ & $9^1$ \\
15a & $45A^0$ & $3A^0_5$ & $3^1$ \\
16e & $128A^0$ & $8G^0_2$ & $8^1$ \\
16f & $128A^0$ & $8G^0_2$ & $8^1$ \\
18h & $54B^0$ & $3B^0_6$ & $3^1$ \\
24f & $96A^0$ & $4A^0_6$ & $4^1$ \\
24g & $96A^0$ & $4A^0_6$ & $4^1$
\end{tabular}
\end{center}

For the claim about coefficients of $f(a\tau+b)$ it suffices to show that the signs of coefficients of $f$ are neither constant nor uniformly alternating, and this can be checked straightforwardly from the tables in \cite{N} or a short computation using the first few terms given in~\cite{ACMS92}.
\end{proof}

\section{Condition II: cyclic orbifold duality}

One of the main results of \cite{vEMS} is that if we are given a holomorphic $C_2$-cofinite vertex operator algebra $V$ of CFT type, and a finite order automorphism $g$, then there exists a cyclic orbifold dual $V/g$, which is also a holomorphic $C_2$-cofinite vertex operator algebra of CFT type, and it is equipped with a dual automorphism $g^*$ of the same order, such that $(V/g)/g^* \cong V$ and $g^{**} = g$. We will use this duality to eliminate certain completely replicable functions from consideration.

In \cite{C17}, the first author proved a conjecture in \cite{T93}, giving an orbifold-duality correspondence between non-Fricke automorphisms of $V^\natural$ and fixed-point free automorphisms of the Leech lattice satisfying a ``no massless states'' condition.

\begin{prop}Let $V$ be a moonshine-like vertex operator algebra, and let $g$ be an automorphism such that $T_{g^k}$ is a non-Fricke monstrous function for some $k$. Then $V \cong V^\natural$, and $g \in \bM$.
\end{prop}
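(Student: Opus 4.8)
The plan is to reduce this to the two external results invoked just above, namely the cyclic orbifold theory of \cite{vEMS} and the orbifold-duality correspondence of \cite{C17}. First I would observe that by hypothesis $T_{g^k}$ is completely replicable (Borcherds's theorem, applied to the moonshine-like $V$), and its $k$-th replicate is $T_{g^k}$; so $g^k$ is a finite order automorphism of $V$ whose trace function is a non-Fricke monstrous function. The key point is then to recognize that ``non-Fricke'' on the side of completely replicable functions translates into ``non-Fricke'' for the automorphism $g^k$ of $V$, i.e., that the Fricke involution acts nontrivially on $T_{g^k}$, so that the cyclic orbifold dual $V/g^k$ is not isomorphic to $V$ (for a Fricke element the orbifold is self-dual, whereas here it is a genuinely different VOA).

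Next I would apply the cyclic orbifold theory of \cite{vEMS} to the pair $(V, g^k)$: this produces a holomorphic $C_2$-cofinite VOA $V/g^k$ of CFT type, with dual automorphism, and its graded dimension is determined by the McKay--Thompson series data of $g^k$. Since $T_{g^k}$ is a non-Fricke monstrous function, its orbifold has graded dimension equal to that of the Leech lattice VOA $V_\Lambda$ (this is precisely the content of the Fricke/non-Fricke dichotomy for the monstrous functions: the non-Fricke monstrous orbifolds land on $V_\Lambda$, matching the lattice side of Tuite's conjecture). Here I would need to be slightly careful: a priori $V$ is only \emph{some} moonshine-like VOA, not $V^\natural$, so I cannot directly quote the computation of the orbifold of $V^\natural$. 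Instead I would argue that the graded dimension of $V/g^k$ is forced by the vector-valued modular form governing the $\langle g^k \rangle$-orbifold, which depends only on the completely replicable function $T_{g^k}$ and not on $V$ — this is where \cite{vEMS} does the heavy lifting. One then concludes $V/g^k$ has graded dimension $\dim V_\Lambda$, hence (since $V_\Lambda$ is characterized among holomorphic $C_2$-cofinite VOAs of central charge 24 by its graded dimension having constant term 24, the Leech-lattice case of the Schellekens classification / the rigidity of the no-massless-states condition) $V/g^k \cong V_\Lambda$, with the dual automorphism $g^{k*}$ fixed-point free and satisfying the no massless states condition.

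Finally I would invoke \cite{C17} directly: the orbifold-duality correspondence there says that such a configuration — a moonshine-like VOA with an automorphism whose orbifold is $V_\Lambda$ together with a fixed-point-free no-massless-states automorphism — arises \emph{only} from $V^\natural$ with a non-Fricke element of $\bM$. Dualizing back, $V \cong (V/g^k)/g^{k*} \cong V_\Lambda / g^{k*} \cong V^\natural$, and this identification carries $g^k$ to an element of $\bM$. It remains to upgrade from $g^k \in \bM$ to $g \in \bM$ and $V \cong V^\natural$: once $V \cong V^\natural$ is established, $g$ is an automorphism of $V^\natural$, and $\operatorname{Aut}(V^\natural) = \bM$ by the theorem of Frenkel--Lepowsky--Meurman / Griess, so $g \in \bM$ automatically. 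The main obstacle is the middle step: carefully justifying that the graded dimension of $V/g^k$ is independent of which moonshine-like $V$ we started from, so that the non-Fricke monstrous hypothesis on the \emph{function} $T_{g^k}$ genuinely pins down the \emph{VOA} $V/g^k$ up to isomorphism; this is exactly the place where the full strength of the orbifold constructions of \cite{vEMS} and the correspondence of \cite{C17} must be brought to bear rather than merely the numerical moonshine data.
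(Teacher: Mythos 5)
Your proposal is correct and follows essentially the same route as the paper: show that the cyclic orbifold $V/g^k$ is forced to be $V_\Lambda$ because the relevant character data depend only on the completely replicable function $T_{g^k}$ (this is exactly the paper's remark that the proof of Theorem~4.5 of \cite{C17} uses only $T_{g^k}$, its replicates, and their ${\rm SL}_2(\bZ)$-transforms, so the hypothesis $V \cong V^\natural$ there can be relaxed), and then dualize via Corollary~4.6 of \cite{C17} to get $V \cong V^\natural$ and hence $g \in \bM$. The only difference is that you unpack the internals of those two citations (Dong--Mason's $\dim V_1 = 24 \Rightarrow V \cong V_\Lambda$, the no-massless-states condition on $(g^k)^*$) rather than quoting them wholesale.
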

\begin{proof}By our assumption that $T_{g^k}$ is a non-Fricke monstrous function, the proof of Theorem~4.5 of~\cite{C17} implies $V/g^k$ is isomorphic to $V_\Lambda$. We note that the statement of the theorem assumes \smash{$V \cong V^\natural$}, but the proof only uses the completely replicable function $T_{g^k}$ and its replicates and ${\rm SL}_2(\bZ)$-transforms, so it works under the assumption that $V$ is only moonshine-like. By Corollary~4.6 of~\textit{loc.~cit.}, cyclic orbifold duality then implies $V \cong V^\natural$, since $(g^k)^*$ is then a~fixed-point free anomaly-free automorphism of $\Lambda$ satisfying the ``no massless states condition'' given in~\cite{T93}.
\end{proof}

\begin{cor} \label{cor:bad-orbifold-list} The primitive non-monstrous functions $4a$, $9c$, $12c$, $12d$, $16a$, $16b$, $16c$, $16d$, $20d$, $24h$, $25a$, $27a$, $27b$, $27c$, $32b$, $36f$, $40d$, $44b$, and $49a$ are non-moonshine-like. The non-primitive function $18e$ is also non-moonshine-like.
\end{cor}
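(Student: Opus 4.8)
The plan is to exhibit, for each function $f$ appearing in the statement, a replicate of $f$ that is a non-Fricke monstrous function, and then to combine Borcherds's theorem with the preceding proposition to reach a contradiction with the assumption that $f$ is moonshine-like.

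Fix one such $f$ and suppose for contradiction that $f = T_g$ for some finite-order automorphism $g$ of a moonshine-like vertex operator algebra $V$. A moonshine-like $V$ has central charge $24$ and graded dimension $J$, and, being holomorphic, is self-dual and hence carries a nondegenerate invariant bilinear form; so the hypotheses of Borcherds's theorem (the first theorem of Section~2) are satisfied, and that theorem tells us the $k$-th replicate of $f$ equals $T_{g^k}$ for every $k \geq 1$. Suppose now that for some $k$ the $k$-th replicate $f^{(k)}$ --- a function determined purely combinatorially from $f$ by complete replicability, and independently tabulated in \cite{ACMS92} --- is a non-Fricke monstrous function. Then $T_{g^k} = f^{(k)}$ is non-Fricke monstrous, so the preceding proposition forces $V \cong V^\natural$ and $g \in \bM$; but then $f = T_g$ is itself a monstrous function, contradicting the fact that $f$ occurs in a list of non-monstrous functions. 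Hence $f$ is non-moonshine-like.

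It then remains to carry out a finite verification: for each listed $f$, locate a $k$ as above. For the nineteen primitive non-monstrous functions named in the statement, every nontrivial replicate is monstrous by the definition of ``primitive'', so it suffices to find a single nontrivial replicate that is non-Fricke; the Fricke versus non-Fricke status of a monstrous function is read off from the $n|h+$ notation of \cite{CN79} (equivalently, from the tables of Section~1.3 of \cite{F93}, as explained after Proposition~\ref{prop:fricke-involution-condition}), together with the power maps on conjugacy classes of $\bM$. For the non-primitive function $18e$, which has a non-monstrous replicate and so is not covered by that shortcut, one additionally checks that the specific replicate chosen is genuinely monstrous before applying the argument above.

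The main obstacle is exactly this case analysis, and it is responsible for the exceptions in the main theorem. The delicate functions are those --- notably $18e$, $63a$, and $117a$ --- that have the non-monstrous function $9a$ among their replicates: such a replicate is useless here, being non-monstrous and hence outside the scope of the preceding proposition, so one must decide case by case whether any of the remaining replicates is non-Fricke. For the nineteen primitive functions and for $18e$ this succeeds; for $9a$, $63a$, and $117a$ it fails, because all of their monstrous replicates turn out to be Fricke. The individual computations are routine given the classification data in \cite{C04} and \cite{ACMS92}, but they are precisely the reason the main theorem must leave $9a$, $63a$, and $117a$ aside.
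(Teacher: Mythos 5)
Your proposal is correct and follows essentially the same route as the paper: the paper's proof of this corollary is the single sentence that in each case some $k$-th replicate ($k>1$) is a non-Fricke monstrous function, with the deduction via Borcherds's theorem (identifying $f^{(k)}$ with $T_{g^k}$) and the preceding proposition left implicit exactly as you spell it out. Your additional remarks about why $18e$ needs separate care and why the method fails for $9a$, $63a$, $117a$ match the paper's later discussion.
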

\begin{proof}In each case, there is some $k > 1$ such that the $k$th replicate is a non-Fricke monstrous function.
\end{proof}

\section{The last three functions}

We now summarize what we know:

\begin{thm} \label{thm:main}All primitive non-monstrous functions except possibly $9a$ are non-moonshine-like. All non-monstrous functions except possibly $9a$, $63a$, and $117a$ are non-moonshine-like.
\end{thm}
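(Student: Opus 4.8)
The plan is to obtain Theorem~\ref{thm:main} by collating the three elimination criteria of Sections~3 and~4 with the reduction to primitive functions noted in the remark of Section~2. First I would write down the list of thirty-five primitive non-monstrous functions recorded in that remark and verify that every entry other than $9a$ lies in the union of the three explicit lists already produced: the $n|h$-type functions $2a$, $4a$, $8a$, $12a$, $44a$ on which the Fricke involution acts by $-1$ (Corollary~\ref{cor:bad-fricke-list}); the one-cusp functions $5a$, $6d$, $8b$, $8c$, $9b$, $9d$, $15a$, $16e$, $16f$, $24f$, $24g$ (Corollary~\ref{cor:bad-one-cusp-list}); and the functions $4a$, $9c$, $12c$, $12d$, $16a$, $16b$, $16c$, $16d$, $20d$, $24h$, $25a$, $27a$, $27b$, $27c$, $32b$, $36f$, $40d$, $44b$, $49a$ possessing a non-Fricke monstrous replicate (Corollary~\ref{cor:bad-orbifold-list}). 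A line-by-line comparison shows the union of these three lists is exactly the set of thirty-five primitive non-monstrous functions with $9a$ removed; this gives the first sentence of the theorem.

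For the second sentence I would invoke Corollary~\ref{cor:bad-if-replicate-is-bad}: a completely replicable function is non-moonshine-like as soon as one of its replicates is. Since every non-monstrous completely replicable function has, among its iterated replicates, at least one primitive non-monstrous function, the only non-monstrous functions not already covered are those whose primitive non-monstrous replicates are all equal to $9a$. Reading off the replication data tabulated in~\cite{ACMS92} and~\cite{C04}, these are precisely $9a$, $18e$, $18h$, $63a$, and $117a$. Of these, $18e$ appears in the list of Corollary~\ref{cor:bad-orbifold-list} and $18h$ in the list of Corollary~\ref{cor:bad-one-cusp-list}, so both are eliminated; hence the only non-monstrous functions not shown to be non-moonshine-like are $9a$, $63a$, and $117a$, which is the second sentence.

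The only genuinely non-formal ingredients are the two finite verifications invoked above: that the three corollary lists jointly exhaust the primitive non-monstrous functions apart from $9a$, and that $\{9a, 18e, 18h, 63a, 117a\}$ is exactly the set of non-monstrous functions whose primitive non-monstrous replicates all equal $9a$. Both are mechanical checks against the classification of completely replicable functions, but they are where an omission would be easiest to make, so I would carry them out by tabulating, for each non-monstrous completely replicable function, its replicates and the elimination criterion (if any) that applies. No further analytic input is needed: the substantive content has already been absorbed into Corollary~\ref{cor:non-negative-integer-coefficient}, the cusp-orbit argument of Proposition~\ref{prop:one-cusp}, Corollary~\ref{cor:bad-if-replicate-is-bad}, and the orbifold-duality proposition of Section~4.
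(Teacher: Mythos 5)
Your proposal is correct and follows essentially the same route as the paper: the first sentence from the union of Corollaries~\ref{cor:bad-fricke-list}, \ref{cor:bad-one-cusp-list}, and~\ref{cor:bad-orbifold-list}, and the second from Corollary~\ref{cor:bad-if-replicate-is-bad} together with a finite check of the functions having $9a$ as a replicate, using the direct eliminations of $18e$ and $18h$. The only cosmetic difference is organizational: the paper explicitly enumerates the twelve functions with $9a$ as a replicate ($18b$, $18e$, $18h$, $36b$, $36c$, $36e$, $36h$, $45c$, $63a$, $72b$, $117a$, $126a$) and disposes of each, whereas you characterize the survivors as those whose only primitive non-monstrous replicate is $9a$ -- the same table lookup either way.
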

\begin{proof}The claim about primitive functions follows from Corollaries \ref{cor:bad-fricke-list}, \ref{cor:bad-one-cusp-list}, and~\ref{cor:bad-orbifold-list}. 9a is a~replicate of the functions 18b, 18e, 18h, 36b, 36c, 36e, 36h, 45c, 63a, 72b, 117a, and 126a, so we need to check these to settle the second claim. However, 18e is non-moonshine-like by Corollary~\ref{cor:bad-orbifold-list} and 18h is non-moonshine-like by Corollary~\ref{cor:bad-one-cusp-list}. Of the remaining functions, we find that all but 63a and 117a have non-moonshine-like replicates, so are non-moonshine-like by Corollary~\ref{cor:bad-if-replicate-is-bad}.
\end{proof}

We would have a complete characterization if the following were true:

\begin{conj}The completely replicable function $f_{9a} = q^{-1} + 14q^2 + 65q^5 + 156q^8 + \cdots$ is non-moonshine-like.
\end{conj}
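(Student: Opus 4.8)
The plan is to find a new vertex-operator-algebraic obstruction, since the two conditions developed in Sections~3 and~4 provably do not suffice: $f_{9a}$ has a single orbit of cusps fixed by a Fricke-type involution structure that makes $f_{9a}(-1/\tau)$ have non-negative integer coefficients, and its only nontrivial replicate is the monstrous Fricke function $9A$, so Condition~II gives nothing. Thus the proof cannot be a reduction to existing corollaries; it must exploit finer structure of a hypothetical moonshine-like $V$ carrying an order-$9$ automorphism $g$ with $T_g = f_{9a}$. The natural place to look is the full cyclic orbifold package of~\cite{vEMS}: the orbifold dual $V/g$ is again moonshine-like, with graded dimension determined by $f_{9a}$ and its replicates and ${\rm SL}_2(\bZ)$-transforms, and comes with a dual order-$9$ automorphism $g^*$. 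One would compute the graded dimension of $V/g$ explicitly from the known $q$-expansions and ask whether it can be $J$; if the orbifold construction forces the graded dimension of $V/g$ away from $J$, or forces some twisted-module graded dimension (for powers $g^j$ or for $g^*$) to have a negative or non-integral coefficient, we win.

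The key steps, in order, would be: (1) write down the vector-valued modular function attached to the cyclic group $\langle g\rangle$ acting on $V$, whose components are the $\Tr(g^i|V(g^j))$ twisted-sector graded traces, using that all of these are ${\rm SL}_2(\bZ)$-transforms of $f_{9a}$ and its replicate $9A$ as dictated by~\cite{vEMS}; (2) extract from this the graded dimension of each eigenspace $V(g^j)^{g=\zeta^i}$ and of the orbifold $V/g$, and check integrality and non-negativity of every coefficient; (3) if nothing fails there, iterate the orbifold by considering $(V/g)/(g^*)^k$ for the various $k$, and also examine the intermediate orbifolds $V/g^3$ (an order-$3$ orbifold, whose trace function is the monstrous $9A$-related data) to see whether consistency of the two-step factorization $V \to V/g^3 \to$ (order-$3$ again) imposes a contradiction; (4) as an alternative line, attempt to import the ``$\gamma^2 \in \Gamma_0(N)$'' condition used in~\cite{CMS04, DF09} by arguing that the fixing group of any moonshine-like trace function must lie in the involutory normalizer of $\Gamma_0(N)$ — this is exactly the condition the introduction singles out as the missing link — and check directly that $\Gamma_{f_{9a}}$ violates it.

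The main obstacle I expect is step~(4)'s central claim, or equivalently finding any genuinely new positivity violation in steps~(1)--(3): the whole point of the paper is that $f_{9a}$ survives all currently known non-negativity and orbifold-duality tests, so a naive recomputation of twisted-module dimensions will not produce a negative coefficient. The real content has to come from a structural fact not yet available — for instance, a constraint on which automorphisms of the Leech lattice or of $V^\natural$ can be ``anomalous'', a Lie-algebra-cohomological obstruction to lifting $g$ to the orbifold, or an argument (perhaps via the CHL/Atkin--Lehner dualities of~\cite{PPV16, PPV17} mentioned in the introduction) that pins down the conformal weight of the $g$-twisted sector modulo the level in a way $f_{9a}$ cannot satisfy. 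I would therefore expect the proof to require first proving a new general lemma — most plausibly ``every moonshine-like trace function has fixing group inside the involutory normalizer of its $\Gamma_0(N)$'' — and only then a short computational check that $9a$ fails it; absent such a lemma, the conjecture stays open, which is precisely the state of affairs the paper reports.
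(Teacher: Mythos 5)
The statement you were asked to prove is not a theorem of the paper: it is stated as an open \emph{Conjecture}, and the paper offers no proof of it. There is therefore no ``paper's own proof'' to compare against. Your proposal is, accordingly, not a proof either --- it is a survey of strategies, and you say so yourself in the final sentence. That self-assessment is correct, and it matches the paper's own discussion almost exactly: the paper reports that an extensive computation suggests all coefficients of the vector-valued modular function $F^{9a}$ from \cite{GM2} are non-negative integers, so your steps (1)--(3) (recomputing twisted-sector graded dimensions and iterated orbifolds) are already known to fail to produce a contradiction; and the paper explicitly singles out the involutory-normalizer condition of \cite{CMS04} and \cite{DF09} --- your step (4) --- as the property that would eliminate $9a$ if only it had a vertex-operator-algebraic justification, which it currently does not. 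Your diagnosis of why Condition II gives nothing (the only nontrivial replicate of $9a$ is the Fricke function $9A$) and of where the missing structural input must come from (possibly the CHL/Atkin--Lehner dualities of \cite{PPV16,PPV17}) also agrees with the paper.

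The genuine gap, then, is the one you name: the central claim of step (4), that every moonshine-like trace function has fixing group contained in the involutory normalizer of $\Gamma_0(N)$, is an unproven lemma, and without it nothing in your outline closes the argument. Since the paper itself leaves the conjecture open for precisely this reason, your proposal should be read as a correct account of the obstruction rather than as a proof attempt with a fixable flaw. If you were expecting to find a proof in the paper, be aware that none exists; the honest conclusion is that the statement remains a conjecture.
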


This conjecture seems much more plausible than its negation, which is equivalent to the existence of a moonshine-like vertex operator algebra that admits an automorphism of order 9 with trace $f_{9a}$, but no non-Fricke automorphisms.

The function 9a is the cube root of $T_{3A}(3\tau) + 42 = q^{-3} + 42 + 783q^3 + 8672q^6 + 65367q^9 + \cdots$. From \cite{F93}, we know that its eigengroup has the form
\begin{gather*} \Gamma_0(9|3)^+ = \left( \begin{smallmatrix} 1/3 & 0 \\ 0 & 1 \end{smallmatrix} \right) \Gamma_0(3)^+ \left( \begin{smallmatrix} 3 & 0 \\ 0 & 1 \end{smallmatrix} \right), \end{gather*}
and the fixing group is the index 3 normal subgroup uniquely determined by the fact that $y = \left( \begin{smallmatrix} 1 & 1/3 \\ 0 & 1 \end{smallmatrix} \right)$ acts by $e(-1/3)$, and $x = \left( \begin{smallmatrix} 1 & 0 \\ 9 & 1 \end{smallmatrix} \right)$ acts by $e(1/3)$. The fixing group contains $\Gamma_0(27)$ as an index 6 subgroup. In particular, the fixing group lies in $N_{{\rm SL}_2(\bR)}(\Gamma_0(27))$, but not in the involutory normalizer. This failure to lie in the involutory normalizer is precisely the property that eliminates this group from the Conway--Norton list of candidate functions. Unfortunately, the involutory normalizer of $\Gamma_0(N)$ does not seem to have a vertex-operator-algebraic interpretation at this time.

An extensive computation strongly suggests that all coefficients of the vector-valued modular function $F^{9a}$, constructed in Section~3 of~\cite{GM2}, are non-negative integers. The same seems to be true for 63a and 117a. This means the existence of abelian intertwining algebras attached to cyclic orbifolds (shown in~\cite{vEMS}) does not yield a criterion that we can use to show that these functions are non-moonshine-like.

Naturally, we would be very interested in any new conditions on trace functions that could lead to a resolution to this conjecture.

\subsection*{Acknowledgements}
This research was funded by JSPS Kakenhi Grant-in-Aid for Young Scientists (B) 17K14152.

\pdfbookmark[1]{References}{ref}
\LastPageEnding

\end{document}